\numberwithin{equation}{section}
\newtheorem{theorem}{Theorem}[section]
\newtheorem{proposition}[theorem]{Proposition}
\newtheorem{lemma}[theorem]{Lemma}
\newtheorem{corollary}[theorem]{Corollary}
\theoremstyle{definition}
\def\now{ \ifnum\hours>11 \ifnum\hours>12 \advance\hours by
-12 \fi \number\hours:\ifnum\mins<10 0\fi \number\mins\ pm,\ \else
\ifnum\hours=0 \hours=12 \fi \number\hours:\ifnum\mins<10 0\fi
\number\mins\ am,\ \fi}
\newcommand{\V}{\mathbb{V}}
\newcommand{\bea}{\begin{equation}}
\newcommand{\eea}{\end{equation}}
\newcommand{\CSK}{Cauchy-Stieltjes Kernel (CSK)\ \renewcommand{\CSK}{CSK\ }}
\newcommand{\FEF}{Free Exponential  (FE)\ \renewcommand{\FEF}{FE\ }}
\title[Characterization of quadratic CSK families by orthogonality of polynomials]{Characterization of quadratic Cauchy-Stieltjes Kernel families by orthogonality of polynomials}
\author{Raouf Fakhfakh}
\address{ Faculty of Sciences, Sfax University, Tunisia. }
\email{fakhfakh.raouf@gmail.com}
\keywords{Cauchy kernel, q-derivative, orthogonal polynomials}
\date{Printed \today. File \jobname.tex}
\begin{document}

\begin{abstract}
In this paper we specify some facts about the sequence of polynomials associated to a \CSK family and we prove that quadratic variance function is characterized by the property of orthogonality of these polynomials.
\end{abstract}

\maketitle

\section{Introduction}

The notion of variance function of a natural exponential family (NEF) has drawn considerable attention of recherche and many classifications of NEF by the form of their variance function has been realized. For many common NEFs the variance function takes a very simple form. Morris (\cite{Morris}) describe the class of real NEFs such that the variance function is a polynomial in the mean of degree less than or equal to two. In \cite{Letac-Mora}, Letac and Mora have extended the work of Morris by classifying all real cubic NEFs such that the variance function is a polynomial in the mean of degree less than or equal to three. 
This classes has received a deal of attention in the statistical literature and many interesting characteristic properties have been established.
A remarkable characteristic result is due to Meixner (\cite{Meixner}). It characterizes the distribution $\mu$ for which there exists a family of $\mu$-orthogonal polynomials with an exponential generating function. These distributions generates exactly the Morris class of NEFs. A second characterization is due to Feinsilver (\cite{Feinsilver}), who shows that a certain class of polynomials naturally associated to a NEF is $\mu$-orthogonal if and only if the family is in the Morris class.  In \cite{Hassairi-Zarai}, Hassairi and Zarai have introduced the notion of 2-orthogonality for a sequence of polynomials to give extended versions of the Meixner and Feinsilver characterizations results based on orthogonal polynomials. In fact, they show that the cubicity of the variance function is characterized by the property of 2-orthogonality.

In a manner analogous to the definition of NEFs, Bryc and Ismail (see \cite{Bryc-Ismail-05}) have introduced the definition of $q$-exponential families. They have identified all $q$-exponential families when $|q|<1$. In particular they have studied the case when $q=0$ which is related to free probability theory by using the Cauchy-Stieltjes kernel $1/(1-\theta x)$ instead of the exponential kernel $\exp(\theta x)$. In \cite{Bryc-06-08}, Bryc continue  the study of  \CSK families for compactly supported probability measures $\nu$. He has shown that such families can be parameterized by the mean and under this parametrization, the family (and measure $\nu$) is uniquely determined by the variance function $V(m)$ and the mean $m_0$ of $\nu$. He has also described the class of quadratic \CSK families. Up to affine transformations and powers of free convolution, this class consists of the free Meixner distributions. In \cite{Bryc-Hassairi-09}, Bryc and Hassairi continue the study of \CSK families by extending the results to allow measures $\nu$ with unbounded support, providing the method to determine the domain of means, introducing  the ``pseudo-variance" function that has no direct probabilistic interpretation but  has similar properties to the variance function. They have also introduced the notion of reciprocity between tow \CSK families by defining a relation between the $R$-transforms of the corresponding generating probability measure. This leads to describe a class of cubic \CSK families which is related to quadratic class by a relation of reciprocity.

In this paper, we are interested in the class of quadratic \CSK families: Our aim is to characterize such families by the property of orthogonality of polynomials in the Meixner and Feinsilver way. In section 2, after a review of \CSK families, we specify some facts about the sequence of polynomials associated to a \CSK family, in particular we show that the generating function of this sequence converge in a neighborhood of 0. In section 3, we state and prove our main result concerning the characterization of the sequence of polynomials corresponding to distribution generating a quadratic \CSK family by a property of orthogonality. Then, we  determine the families of orthogonal polynomials with a Cauchy-Stieltjes type generating function. This leads to another characterization of the quadratic \CSK family.
\section{Polynomials associated to  CSK families}
The \CSK families arise from a procedure analogous to the definition of NEFs by using the Cauchy-Stieltjes kernel $1/(1-\theta x)$ instead of the exponential kernel $ \exp(\theta x)$. In this section, we present the basic concept of \CSK families and we define the associated polynomials. We first review some facts concerning the polynomials  associated to NEFs.
\subsection{NEFs and associated polynomials}
If $\mu$ is a positive measure on the real line, we denote by
\begin{equation}\label{laplace transform}
L_\mu(\theta)=\displaystyle\int_{\mathbb{R}}\exp(\theta x)\mu(dx),
\end{equation}
its Laplace transform, and we denote $\Theta(\mu)=\textrm{interior}\{\theta\in \mathbb{R};\ L_\mu(\theta)<\infty\}.$
$\mathcal{M}(\mathbb{R})$ will denote the set of measures $\mu$ such that $\Theta(\mu)$ is not empty and $\mu$ is not concentrated on one point. If $\mu$ is in $\mathcal{M}(\mathbb{R})$, we also denote
\begin{equation}\label{cumulate function}
K_\mu(\theta)=\log(L_\mu(\theta)),\ \ \ \ \ \theta\in\Theta(\mu),
\end{equation}
the cumulate function of $\mu$. To each $\mu$ in $\mathcal{M}(\mathbb{R})$ and $\theta$ in $\Theta(\mu)$, we associate the following probability distribution:
\begin{equation}\label{P-theta}
P(\theta,\mu)(dx)=\exp(\theta x-K_\mu(\theta))\mu(dx).
\end{equation}
The set
\begin{equation}\label{def NEF}
F=F(\mu)=\{P(\theta,\mu),\ \theta\in\Theta(\mu)\}
\end{equation}
is called the natural exponential family (NEF) generated by $\mu$. The map $\theta\longmapsto K'_\mu(\theta)$ is a bijection between $\Theta(\mu)$ and its image $M_F$ which is called the domain of the means of the family F. Denote by $\phi_\mu:M_F\longrightarrow \Theta(\mu)$ the inverse of $K'_\mu$.
We are thus led to the parametrization of $F$ by the mean $m$. For each $\mu\in\mathcal{M}(\mathbb{R})$ and $m\in M_F$, let us denote $P(m,F)=P(\phi_\mu(m),\mu)$
and rewrite $F=\{P(m,F);\ m\in M_F\}$.
The density of $P(m,F)$ with respect to $\mu$ is
\begin{equation}\label{density P(m,F)}
h_{\mu}(x,m)=\exp(\phi_\mu(m) x-k_\mu(\phi_\mu(m))).
\end{equation}
The variance of $P(m,F)$ is denoted $V_F(m)$. The map $m\longmapsto V_F(m)$ is called the variance function of the NEF $F$ and is defined for all $m\in M_F$ by
$$V_F(m)=K''_\mu(\phi_\mu(m))=(\phi'_\mu(m))^{-1}.$$
The important feature of $V_F(.)$ is that it characterizes the NEF F in the following sense: If $F_1$ is another NEF such that $M_F\cap M_{F_1}$ contains a non-empty open interval $\mathcal{O}$ and $V_F(m)=V_{F_1}(m)$ for $m\in\mathcal{O}$, then $F=F_1$. Thus $(M_F,V_F(m))$ completely characterizes $F$.

Consider now a real natural exponential family $F$ and take $\mu=P(m_0,F)$ with $m_0$ fixed in $M_F$. the density $h_{\mu}(.,m)$ of $P(m,F)$ with respect to $\mu$ is still given by \eqref{density P(m,F)} with $h_{\mu}(.,m_0)\equiv 1$. It is easily verified by induction on $\mathbb{N}$ that there exists a polynomials $H_n$ in $x$ of degree $n$ such that
\begin{equation}\label{to get poly}
\frac{\partial^n}{\partial m^n}h_{\mu}(x,m)=H_n(x,m)h_{\mu}(x,m).
\end{equation}
and
\begin{equation}\label{recursion}
H_{n+1}(x,m)=\phi'_{\mu}(m)(x-m)H_{n}(x,m)+R_{n+1}(x,m),
\end{equation}
where $R_{n+1}$ is a polynomial in $x$ of degree $<n+1$. In particular, we have that $H_0(x,m)=1$ and $H_1(x,m)=\phi'_{\mu}(m)(x-m)$.
\subsection{\CSK families and associated polynomials}

 The notations are the ones used in \cite{Bryc-Hassairi-09}. Suppose $\nu$ is a non-degenerate probability measure with support bounded from above. Then
\begin{equation}   \label{M(theta)}
M_{\nu}(\theta)=\int \frac{1}{1- \theta x} \nu(dx)
\end{equation}
 is well defined for all $\theta\in [0,\theta_+)$ with $1/\theta_+=\max\{0, \sup {\rm supp} (\nu)\}$ and
$$P_\theta(dx)=\frac{1}{M_{\nu}(\theta)(1-\theta
x)}\nu(dx)$$
is a probability measure for each $\theta\in[0,\theta_+)$. Then an analog of the NEF, with Cauchy kernel $1/(1-\theta x)$ replacing the exponential kernel $\exp(\theta x)$, is  the family
 \begin{equation}\label{K_and_F}
\mathcal{K}_+(\nu)=\{P_\theta(dx); \theta\in(0,\theta_+)\}=\{Q_m(dx), m\in(m_0,m_+)\}
 \end{equation}
which we call the (one-sided) \CSK family generated by $\nu$.

As in the case of NEF,
the \CSK family can be re-parameterized by the mean, and we already included this alternative parametrization on the right hand side of \eqref{K_and_F}.   
The interval $(m_0,m_+)$ is called the (one sided) domain of means, and is determined as the
image of $(0,\theta_+)$ under the strictly increasing function
$k_{\nu}(\theta)=\int x P_\theta(dx)$ which is given by the formula
\begin{equation}\label{L2m}
k_{\nu}(\theta)=\frac{M_{\nu}(\theta)-1}{\theta M_{\nu}(\theta)}.
\end{equation}
The variance function
\begin{equation}
  \label{Def Var}
  V_{\nu}(m)=\int (x-m)^2 Q_m(dx)
\end{equation}
is the fundamental concept of the theory of NEF, and also of the theory of \CSK families. Unfortunately,
if $\nu$ does not have the first moment, all measures in the \CSK family generated
by $\nu$ have infinite variance. Reference \cite{Bryc-Hassairi-09} introduces the concept of {\em pseudo-variance function} which is defined in general by
\begin{equation}\label{m2v}
\V_{\nu}(m)=m\left(\frac{1}{\psi_{\nu}(m)}-m\right),
\end{equation}
where $\psi_{\nu}:(m_0,m_+)\to (0,\theta_+)$ is the inverse of the
function $k_{\nu}(\cdot)$.
If
$m_0=\int x d\nu$ is finite, the variance function given by  \eqref{Def Var} exists, in fact from proposition
3.2 in \cite{Bryc-Hassairi-09} we know that
\begin{equation}\label{VV2V}
\V_{\nu}(m)=\frac{m}{m-m_0}V_{\nu}(m).\end{equation}
In particular, $\V_{\nu}=V_{\nu}$ when $m_0=0$. Specifically, the re-parameterized measure  involves
the  pseudo-variance function $\V_{\nu}(m)$ and is given by
\begin{equation} \label{F(V)}
 Q_m(dx)=\frac{\V_{\nu}(m)}{\V_{\nu}(m)+m(m-x)}\nu(dx).
\end{equation}


Another interesting fact is that the pseudo-variance function $\V_{\nu}$ characterizes the \CSK family, in fact the generating measure $\nu$ is determined uniquely through the following identities, for technical details, see proposition 3.5 in \cite{Bryc-Hassairi-09}:
if
\begin{equation}\label{z2m}
 z=z(m)=m+\frac{\V_{\nu}(m)}{ m}
 \end{equation}
 then the Cauchy transform
\begin{equation}\label{G-transform}
G_\nu(z)=\int\frac{1}{z-x}\nu(dx).
\end{equation}
satisfies
 \begin{equation}
  \label{G2V}
  G_\nu(z)=\frac{m}{\V_{\nu}(m)}.
\end{equation}

For a non-degenerate probability measure $\nu$ with support bounded
from above, the domain of means $(m_0,m_+)$ is
determined from the following formulas (\cite[Remark 3.3]{Bryc-Hassairi-09})
$m_0:=\lim_{\theta\to 0^+} k_{\nu}(\theta)$
and $m_+=B-1/G_\nu(B)$, with $B=B(\nu)=\max\{0,\ \sup supp(\nu)\}$.

 One may define the one-sided \CSK family for a
generating measure with support bounded from below. The
one-sided \CSK family ${\mathcal{K}}_-(\nu)$ is defined for
$\theta_-<\theta<0$, where $\theta_-$ is either $1/A(\nu)$ or
$-\infty$ with $A=A(\nu)=\min\{0,\inf supp(\nu)\}$.
In this case, the domain of the means for ${\mathcal{K}}_-(\nu)$ is the interval $(m_-,m_0)$ with $m_-=A-1/G_\nu(A)$. If $\nu$ has compact support, the natural domain for the
parameter $\theta$ of the two-sided \CSK family
$\mathcal{K}(\nu)=\mathcal{K}_+(\nu)\cup\mathcal{K}_-(\nu)\cup\{\nu\}$
is $\theta_-<\theta<\theta_+$.

As indicated in formula \eqref{to get poly}, for NEFs the associated polynomials are obtained by taking successive  derivative of the density $h_{\mu}(.,m)$ of $P(m,F)$ with respect to $\mu$. For the \CSK families we use the $q$-derivative for $q=0$. Usually the $q$-derivative operator $\mathcal{D}_q$ is defined by
$$\mathcal{D}_qf(x)=\frac{f(x)-f(qx)}{(1-q)x}, \ \ \ \ \mbox{for}\ x\neq0,$$
where $q$ is fixed and $-1<q<1$. Further we have that $\mathcal{D}^n_qf(x)=\mathcal{D}_q(\mathcal{D}^{n-1}_qf(x))$, for $n=1,2,3,...$, where $\mathcal{D}^0_q$ denotes the identity operator. If $f$ is differentiable then $\mathcal{D}_qf(x)$ tends to $f'(x)$ as $q$ tends to $1$. Note that for any positive integer and $f$ a function for which $f^{(n)}(0)$ exists, we have
$$(\mathcal{D}^n_qf)(0)=\displaystyle\lim_{x\longrightarrow 0}\mathcal{D}^n_qf(x)=\frac{f^{(n)}(0)}{n!}[n]_q!,$$
with $[n]_q!=\displaystyle\frac{(q;q)_n}{(1-q)^n}$ such that for $a\in\mathbb{R}$, $(a;q)_0=1$ and $(a;q)_n=\prod_{k=0}^{n-1}(1-aq^k)$, $n=1,2,...$ In particular, we have that $(\mathcal{D}_0f)(0):=f'(0)$ and for each $n\in\mathbb{N}^*$ for which $f^{(n)}(0)$ exists,
$$(\mathcal{D}^n_0f)(0)=\displaystyle\lim_{x\longrightarrow 0}\mathcal{D}^n_0f(x)=\frac{f^{(n)}(0)}{n!}.$$


\begin{proposition}
Let  ${\mathcal{K}}(\nu)=\{Q_m(dx),\ m\in(m_-,m_+)\}$ be the \CSK family generated by a compactly supported probability measure $\nu$ with mean $m_0(\nu)=0$. Suppose that $\V_{\nu}$ is analytic near $0$ and $\V_{\nu}(0)>0$. The density $f_{\nu}(x,m)$ of $Q_m$ with respect to $\nu$ is given by \eqref{F(V)}. Then there exists a polynomials $P_n$ in $x$ of degree $n$ such that
\begin{equation}\label{poly}
\mathcal{D}_0^nf_{\nu}(x,m)=P_n(x,m)f_{\nu}(x,m).
\end{equation}
In particular $P_0(x,m)=1$ and $P_1(x,m)=\frac{x-m}{\V_{\nu}(m)}$.
\end{proposition}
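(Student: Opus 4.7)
The plan is to induct on $n\ge 0$, using the two hypotheses in the proposition (analyticity of $\V_\nu$ near $0$ and $\V_\nu(0)>0$) to ensure that $f_\nu(x,\cdot)$ is analytic in $m$ near $0$ with $f_\nu(x,0)=\V_\nu(0)/\V_\nu(0)=1$. In particular, the operator $\mathcal{D}_0$, which acts in the variable $m$ and reduces to $\mathcal{D}_0 g(m)=(g(m)-g(0))/m$, is well defined on $f_\nu(x,\cdot)$ and on every function that arises in the induction.

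For the base case, set $P_0(x,m)=1$. For $n=1$, I would compute directly from \eqref{F(V)}:
$$f_\nu(x,m)-1=\frac{m(x-m)}{\V_\nu(m)+m(m-x)},$$
so dividing by $m$ and re-factoring $f_\nu(x,m)$ gives $\mathcal{D}_0 f_\nu(x,m)=((x-m)/\V_\nu(m))\,f_\nu(x,m)$, which identifies $P_1(x,m)=(x-m)/\V_\nu(m)$ as claimed.

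For the inductive step, assume $\mathcal{D}_0^n f_\nu(x,m)=P_n(x,m)f_\nu(x,m)$ with $P_n$ a polynomial in $x$ of degree exactly $n$ whose coefficients are analytic in $m$ near $0$. Applying $\mathcal{D}_0$ once more, I would use $f_\nu(x,0)=1$ to obtain
$$\mathcal{D}_0^{n+1}f_\nu(x,m)=\frac{P_n(x,m)f_\nu(x,m)-P_n(x,0)}{m},$$
and then factor $f_\nu(x,m)$ out using the identity $1/f_\nu(x,m)=1+m(m-x)/\V_\nu(m)$, which after one line of algebra yields
$$P_{n+1}(x,m)=\frac{P_n(x,m)-P_n(x,0)}{m}+\frac{(x-m)P_n(x,0)}{\V_\nu(m)}.$$
The first summand is a polynomial in $x$ of degree at most $n$ with $m$-analytic coefficients (each coefficient of $P_n$ is analytic in $m$ and vanishes at $0$ after subtraction, hence is divisible by $m$ in the ring of $m$-analytic germs); the second summand is a polynomial of degree exactly $n+1$ in $x$.

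The main obstacle is tracking the leading $x$-coefficient to certify that the degree of $P_{n+1}$ is exactly $n+1$ rather than lower. Writing $a_n(m)$ for the leading coefficient of $P_n(x,m)$, the recursion above yields $a_{n+1}(m)=a_n(0)/\V_\nu(m)$, and iterating from $a_0\equiv 1$ together with $\V_\nu(0)>0$ shows that $a_n(m)$ is nonzero in a neighborhood of $0$. This is the one step where $\V_\nu(0)>0$ is used in an essential way; the remaining verifications are purely algebraic.
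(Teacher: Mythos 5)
Your proof is correct and follows essentially the same route as the paper: induction on $n$ using $f_\nu(x,0)=1$ and the identity $1/f_\nu(x,m)=(\V_\nu(m)+m(m-x))/\V_\nu(m)$, so that your recursion $P_{n+1}(x,m)=\frac{P_n(x,m)-P_n(x,0)}{m}+\frac{(x-m)P_n(x,0)}{\V_\nu(m)}$ is just an algebraic rearrangement of the paper's $P_{n+1}(x,m)=\frac{P_n(x,m)}{m}-\frac{P_n(x,0)(\V_{\nu}(m)+m(m-x))}{m\V_{\nu}(m)}$. Your explicit tracking of the leading coefficient $a_{n+1}(m)=a_n(0)/\V_\nu(m)$ to certify that the degree is exactly $n+1$ is a small but welcome addition that the paper leaves implicit.
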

\begin{proof} We verify this result by induction on $n\in\mathbb{N}$. For $n=0$, we have that $\mathcal{D}^0_0f_{\nu}(x,m)=1\times f_{\nu}(x,m)$, so that $P_0(x,m)=1$. For $n=1$, we have that $$\mathcal{D}_0f_{\nu}(x,m)=\frac{x-m}{\V_{\nu}(m)}f_{\nu}(x,m).$$
Then, $P_1(x,m)=\frac{x-m}{\V_{\nu}(m)}$ is a polynomial in $x$ of degree 1, in particular $P_1(x,0)=x/\V_{\nu}(0).$ For $n=2$, we have
$$
\mathcal{D}_0^2f_{\nu}(x,m)
=\frac{\mathcal{D}_0f_{\nu}(x,m)-\mathcal{D}_0f_{\nu}(x,0)}{m}
=  \left[\frac{x-m}{m\V_{\nu}(m)}-
\frac{x(\V_{\nu}(m)+m(m-x))}{m\V_{\nu}(0)\V_{\nu}(m)}\right]f_{\nu}(x,m).
$$
So, $$P_2(x,m)=\frac{x-m}{m\V_{\nu}(m)}-
\frac{x(\V_{\nu}(m)+m(m-x))}{m\V_{\nu}(0)\V_{\nu}(m)}$$ is a polynomial in $x$ of degree 2, in particular
$$P_2(x,0)=\mathcal{D}_0^2f_{\nu}(x,0)=\displaystyle\lim_{m\longrightarrow 0}\mathcal{D}_0^2f_{\nu}(x,m)=\frac{x^2-x\V'_{\nu}(0)-\V_{\nu}(0)}{\V_{\nu}(0)^2}.$$
 Let $n> 2$, suppose that there exists a polynomials $P_n$ in $x$ of degree $n$ such that
$$\mathcal{D}_0^nf_{\nu}(x,m)=P_n(x,m)f_{\nu}(x,m).$$
In this case we have $$P_n(x,0)=\mathcal{D}_0^nf_{\nu}(x,0)=\frac{1}{n!}\left.\frac{\partial^n}{\partial m^n}f_{\nu}(x,m)\right|_{m=0},$$
which is well defined from the fact that  $\V_{\nu}$ is analytic near $0$ and $\V_{\nu}(0)>0$. We have that
\begin{eqnarray*}
\mathcal{D}_0^{n+1}f_{\nu}(x,m)
& = &\mathcal{D}_0(\mathcal{D}_0^nf_{\nu}(x,m))=\frac{\mathcal{D}_0^nf_{\nu}(x,m)-\mathcal{D}_0^nf_{\nu}(x,0)}{m}\\
& = & \left[\frac{P_n(x,m)}{m}-
\frac{P_n(x,0)(\V_{\nu}(m)+m(m-x))}{m\V_{\nu}(m)}\right]f_{\nu}(x,m)= P_{n+1}(x,m)f_{\nu}(x,m).
\end{eqnarray*}
It is clear that
$P_{n+1}(x,m)=\frac{P_n(x,m)}{m}-
\frac{P_n(x,0)(\V_{\nu}(m)+m(m-x))}{m\V_{\nu}(m)}$ is a polynomial in $x$ of degree $n+1$.


\end{proof}
We now make a useful observation through the following lemma, that will be used in the proof of Theorem \ref{TH 2}.
\begin{lemma}
Let $\nu$ be a compactly supported probability measure such that $0\in\Theta=(\theta_-,\theta_+)$ and $m_0(\nu)=0$. Denote by $(T_n)$ the sequence of orthogonal polynomials with respect to $\nu$ such that $T_n$ is monic of degree $n$. Let $r=\sup\{\alpha;\ (-\alpha,\alpha)\subset\Theta\}$. Then the entire serie $\sum z^nT_n(x)$ valued in $L^2(\nu)$ has radius of convergence $\geq r$.
\end{lemma}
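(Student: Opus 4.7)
The plan is to bound $\|T_n\|_{L^2(\nu)}$ by a geometric sequence with ratio $1/r$, from which convergence of $\sum z^n T_n$ in $L^2(\nu)$ for $|z|<r$ follows by comparison with a geometric series. The argument rests on just two ingredients: the identification of $1/r$ with the $L^\infty(\nu)$-norm of the identity function, and the extremal property of monic orthogonal polynomials.

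First I would re-express $r$ in terms of the support of $\nu$. From the description of $\Theta$ given earlier in the paper one has $\theta_+=1/B$ and $\theta_-=1/A$, with $A=\min\{0,\inf\operatorname{supp}(\nu)\}$ and $B=\max\{0,\sup\operatorname{supp}(\nu)\}$. The standing hypothesis $m_0(\nu)=0$, together with non-degeneracy, forces $\inf\operatorname{supp}(\nu)<0<\sup\operatorname{supp}(\nu)$, so $A<0<B$ and
\[
r=\min(\theta_+,-\theta_-)=\frac{1}{\max(|A|,B)}.
\]
Since $\operatorname{supp}(\nu)\subset[A,B]$, this says $|x|\le 1/r$ for $\nu$-almost every $x$, which gives the crude but sufficient bound $\int x^{2n}\,\nu(dx)\le r^{-2n}$.

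Next I would invoke the extremal property of monic orthogonal polynomials. Because $T_n$ is monic of degree $n$ and orthogonal to every polynomial of degree less than $n$, any competing monic polynomial $P$ of degree $n$ decomposes as $P=T_n+Q$ with $\deg Q<n$, so Pythagoras yields $\|P\|^2=\|T_n\|^2+\|Q\|^2\ge\|T_n\|^2$. Applying this with $P(x)=x^n$ and combining with the previous step gives
\[
\|T_n\|_{L^2(\nu)}\le\|x^n\|_{L^2(\nu)}\le r^{-n}.
\]
Consequently, for $|z|<r$ one has $\sum_{n}|z|^n\|T_n\|_{L^2(\nu)}\le\sum_{n}(|z|/r)^n<\infty$, so $\sum z^n T_n$ converges absolutely in $L^2(\nu)$, and the radius of convergence is at least $r$.

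There is really no serious obstacle here; the proof is two estimates stacked on top of each other. The one point that warrants a careful sentence is the identification of $1/r$ with $\|x\|_{L^\infty(\nu)}$, since this is where the hypothesis $m_0(\nu)=0$ enters (it rules out the degenerate case $A=0$ or $B=0$ in which the formula for $r$ would have to be written slightly differently).
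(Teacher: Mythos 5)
Your proof is correct and rests on the same key step as the paper's: the Pythagorean identity $\int x^{2n}\,\nu(dx)=\int\bigl(x^n-T_n(x)\bigr)^2\,\nu(dx)+\int T_n^2(x)\,\nu(dx)$, i.e.\ the extremal property of monic orthogonal polynomials, which gives $\|T_n\|_{L^2(\nu)}^2\le\gamma_{2n}$ with $\gamma_{2n}=\int x^{2n}\,\nu(dx)$. The only divergence is in how the even moments are then controlled: the paper invokes the convergence of $M_\nu(z)=\sum_n\gamma_n z^n$ for $|z|<r$ and compares $\sum_n z^{2n}\|T_n\|_{L^2(\nu)}^2$ termwise with $\sum_n z^{2n}\gamma_{2n}$, whereas you use the equivalent but slightly more explicit observation that $\mathrm{supp}(\nu)\subset[-1/r,1/r]$, so $\gamma_{2n}\le r^{-2n}$ and hence $\|T_n\|_{L^2(\nu)}\le r^{-n}$ — a clean geometric bound that delivers the same conclusion.
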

\begin{proof}
We have to prove that for $|z|<r$
$$\lim_{N\longrightarrow +\infty}\int\left(\sum_{n=0}^Nz^nT_n(x)\right)^2\nu(dx)=\lim_{N\longrightarrow +\infty}\sum_{n=0}^Nz^{2n}\int T_n^2(x)\nu(dx)<+\infty.$$
Denote $b_n=\int T_n^2(x)\nu(dx)$ and $\gamma_n=\int x^n\nu(dx)$. We have that
$$\int x^{2n}\nu(dx)=\int (x^{n}-T_n(x))^2\nu(dx)+\int T_n^{2}(x)\nu(dx).$$
That is $\gamma_{2n}\geq b_n$. From the analyticity of $M_{\nu}$ on $\Theta$, if $|z|<r$
$$M_{\nu}(z)=\int\frac{1}{1-zx}\nu(dx)=\int\sum_{n\geq0}z^nx^n\nu(dx)=\sum_{n\geq0}z^n\gamma_n.$$
Hence $$\sum_{n\geq0}z^{2n}b_{n}\leq\sum_{n\geq0}z^{2n}\gamma_{2n}$$
and converge if $|z|<r$.
\end{proof}
To help in the proof of Theorem \ref{TH 1}, we need to state the following result.
\begin{lemma}\label{L.G(Q)}
Let  $m\in(m_-,m_+)$. Then $Q_m$ is a probability measure.
For $z\in\mathbb{C}\backslash supp(\nu)$ such that $ z\neq m+\V_{\nu}(m)/m $ the Cauchy transform of $Q_m\in\mathcal{K}(\nu)$ is
\begin{equation}\label{G(nu)2G(Q)}
G_{Q_m}(z)=
\frac{1}{m+\V_{\nu}(m)/m-z}\left(\frac{\V_{\nu}(m)}{m}G_\nu(z)-1\right).
\end{equation}

\end{lemma}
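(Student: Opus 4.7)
The plan is to reduce everything to formula \eqref{G2V}, which is the only nontrivial input needed. First I would rewrite the denominator of \eqref{F(V)} in a form adapted to the Cauchy kernel. Set $a:=m+\V_{\nu}(m)/m$, so that $z(m)=a$ in the notation of \eqref{z2m}. A direct algebraic manipulation gives
\[
\V_{\nu}(m)+m(m-x)=m(a-x),
\]
and therefore \eqref{F(V)} may be rewritten in the convenient form
\[
Q_m(dx)=\frac{\V_{\nu}(m)/m}{a-x}\,\nu(dx).
\]

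To show that $Q_m$ is a probability measure it suffices to check total mass one, since the density is positive on $\mathrm{supp}(\nu)$ (the denominator $a-x$ keeps a constant sign, because $a=z(m)$ lies outside $\mathrm{supp}(\nu)$ by the definition of the domain of means). Integrating the above expression gives $\int Q_m(dx)=(\V_{\nu}(m)/m)\,G_\nu(a)$, and \eqref{G2V} applied at $z=a$ yields $G_\nu(a)=m/\V_{\nu}(m)$, so the total mass equals $1$.

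For the Cauchy transform, I would plug the rewritten density into the definition and decompose by partial fractions:
\[
\frac{1}{(z-x)(a-x)}=\frac{1}{a-z}\left(\frac{1}{z-x}-\frac{1}{a-x}\right),\qquad z\neq a.
\]
Integrating against $\nu$ and using the definition \eqref{G-transform} of $G_\nu$ produces
\[
G_{Q_m}(z)=\frac{\V_{\nu}(m)}{m(a-z)}\bigl(G_\nu(z)-G_\nu(a)\bigr).
\]
Substituting $G_\nu(a)=m/\V_{\nu}(m)$ from \eqref{G2V} gives precisely \eqref{G(nu)2G(Q)}.

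There is no real obstacle here: everything follows once one recognizes that the denominator in the density \eqref{F(V)} is proportional to $a-x$ with $a=z(m)$, because then the density becomes a Cauchy kernel whose constant is fixed by \eqref{G2V}. The only point requiring a bit of care is justifying the sign of $a-x$ on $\mathrm{supp}(\nu)$ to conclude positivity of $Q_m$; this follows from the fact that $a=m+\V_{\nu}(m)/m$ equals $1/\psi_\nu(m)$, which lies outside $\mathrm{supp}(\nu)$ by the construction of the CSK family.
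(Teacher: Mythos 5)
Your proof is correct and uses essentially the same idea as the paper: a partial-fraction decomposition of $\frac{1}{(z-x)(a-x)}$ with $a=m+\V_{\nu}(m)/m$, the only cosmetic difference being that the paper leaves the second term as $Q_m(dx)$ and integrates it to $1$, whereas you evaluate $\int\frac{\nu(dx)}{a-x}=G_\nu(a)=m/\V_{\nu}(m)$ via \eqref{G2V}. Your version has the small merit of also explicitly verifying the normalization (the probability-measure claim), which the paper's proof uses implicitly without comment.
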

\begin{proof} From \eqref{F(V)},
\begin{multline*}
\frac{1}{z-x}Q_m(dx)
\\=\frac{\V_{\nu}(m)}{(z-x)(m^2+\V_{\nu}(m)-mz)}\nu(dx) - \frac{ \V_{\nu}(m)}{(\V_{\nu}(m)+m(m-x))((m+\V_{\nu}(m)/m)-z)}\nu(dx)
\\=\frac{\V_{\nu}(m)}{m^2+\V_{\nu}(m)-mz}(z-x)^{-1}\nu(dx) -\frac{m}{m^2+\V_{\nu}(m)-mz}Q_m(dx).
\end{multline*}
 Integrating, we get \eqref{G(nu)2G(Q)}.

\end{proof}

\section{Characterizations of the quadratic CSK families }
As pointed out in the introduction, reference \cite{Bryc-06-08} describe the class of quadratic \CSK families; that is the class of \CSK families such that the corresponding variance function is a polynomial function in the mean of degree at most 2. Up to affine transformation and powers of free convolution, this class consists of, the Wigner's semi-circle (free gaussian), the Marchenko Pastur (free Poisson), the free Pascal (free negative binomial), the free Gamma, the free analog of hyperbolic type law and the free binomial families. In this section, we give new versions of the Feinsilver and Meixener characterizations results based on orthogonal polynomials. These versions subsume the quadratic class of \CSK families.

\subsection{Characterization of the quadratic CSK families in the Feinsilver way}
Feinsilver (\cite{Feinsilver}) characterizes the class of quadratic NEFs on $\mathbb{R}$ as the ones for which the associated polynomials are orthogonal with respect to the generating measure, more precisely, (see \cite{Letac}):
\begin{theorem}\label{TH 01}
Let $F$ be a NEF on $\mathbb{R}$ and let $\mu$ an element of $F$ with mean $m_0$. Consider the polynomials $(H_n)_{n\in\mathbb{N}}$ defined by $H_n(x)=\frac{\partial^n}{\partial m^n}h_{\mu}(x,m)|_{m=m_0}$. Then the following statements are equivalent:
\begin{itemize}
\item[(i)] The polynomials $(H_n)_{n\in \mathbb{N}}$ are $\mu$-orthogonal.
\item[(ii)] $F$ is a quadratic NEF.
\item[(iii)] There exists real numbers $(\alpha_i)_{0\leq i\leq2}$ such that
$$xH_n(x)=n(\alpha_2(n-1)+1)H_{n-1}(x)+(n\alpha_1+m_0)H_n(x)+\alpha_0H_{n+1}(x).$$
Furthermore, in this case we have $V_{F}(m)=\alpha_0+\alpha_1(m-m_0)+\alpha_2(m-m_0)^2.$
\end{itemize}
\end{theorem}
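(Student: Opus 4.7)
My plan is to exploit the generating function $G(x,s):=h_\mu(x,m_0+s)$ as the central object. After re-generating $F$ from $\mu$ itself — so that $\phi_\mu(m_0)=0$, $K_\mu(0)=0$, and $h_\mu(x,m_0)\equiv 1$ — the Taylor expansion in the mean parameter together with \eqref{to get poly} gives
\[
G(x,s)=\sum_{n\ge 0}H_n(x)\,\frac{s^n}{n!}.
\]
Two auxiliary facts come for free: differentiating $\int h_\mu(x,m)\,\mu(dx)=1$ at $m=m_0$ yields $\int H_n\,d\mu=0$ for all $n\ge 1$, and \eqref{recursion} at $m=m_0$ gives $H_0=1$ and $H_1(x)=(x-m_0)/V_F(m_0)$.

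The master identity I would use comes from $\partial_m h_\mu=\phi'_\mu(m)(x-m)h_\mu$ together with $\phi'_\mu=1/V_F$: writing $V(s):=V_F(m_0+s)$, this reads
\[
V(s)\,\partial_s G(x,s)=(x-m_0-s)\,G(x,s).
\]
This PDE converts information about $V_F$ into constraints on the sequence $(H_n)$. For \textbf{(ii)}$\Rightarrow$\textbf{(iii)} I substitute $V(s)=\alpha_0+\alpha_1 s+\alpha_2 s^2$ and the power series for $G$ into this PDE and equate coefficients of $s^n$; after standard bookkeeping one gets exactly
\[
(x-m_0)H_n=\alpha_0 H_{n+1}+n\alpha_1 H_n+n\bigl(\alpha_2(n-1)+1\bigr)H_{n-1},
\]
which rearranges to the stated recurrence. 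Reversing the steps (multiplying the recurrence by $s^n/n!$ and summing) recovers the same PDE with a quadratic $V(s)$, giving \textbf{(iii)}$\Rightarrow$\textbf{(ii)} and pinning down $V_F(m)=\alpha_0+\alpha_1(m-m_0)+\alpha_2(m-m_0)^2$.

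For \textbf{(iii)}$\Rightarrow$\textbf{(i)} I invoke Favard's theorem on the recurrence (using $\alpha_0\neq 0$ and that $n(\alpha_2(n-1)+1)/\alpha_0>0$ for all admissible $n$, guaranteed by the Morris classification) to produce a measure $\tilde\mu$ making $(H_n)$ orthogonal; integrating the PDE explicitly gives
\[
G(x,s)=\exp\!\left(\int_{0}^{s}\frac{x-m_0-t}{V(t)}\,dt\right),
\]
which displays $G(x,s)$ as the $\mu$-density at mean $m_0+s$ of a concrete NEF, and uniqueness of the generating measure of a NEF forces $\tilde\mu=\mu$, so $(H_n)$ is $\mu$-orthogonal. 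For \textbf{(i)}$\Rightarrow$\textbf{(iii)} the classical three-term recurrence for orthogonal polynomials of a positive measure supplies scalars $A_n,B_n,C_n$ with $xH_n=A_nH_{n+1}+B_nH_n+C_nH_{n-1}$; feeding the resulting generating-function identity back into the universal PDE $(x-m_0-s)G=V(s)\partial_sG$ and separating powers of $x$ forces $V(s)$ to be a polynomial of degree at most two and identifies $A_n=\alpha_0$, $B_n=n\alpha_1+m_0$, $C_n=n(\alpha_2(n-1)+1)$.

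The step I expect to fight with is extracting \emph{quadraticity} of $V_F$ from the bare orthogonality assumption in (i): orthogonality alone is the softest of the three hypotheses, and one must combine the universal PDE with the Favard recurrence and exploit that $A_n,B_n,C_n$ do not depend on $x$ in order to bound the degree of $V(s)$. Once that dependence is unpacked, all remaining implications reduce to matching coefficients in a single power series.
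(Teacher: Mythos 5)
First, a point of reference: the paper does not prove Theorem \ref{TH 01} at all --- it is quoted as a known result of Feinsilver (via Letac) and serves only as the classical template for the CSK analogue, Theorem \ref{TH 1}, which is the statement the paper actually proves. So your attempt can only be measured against the standard argument and against the paper's proof of the analogous CSK theorem. Your central device --- the identity $V(s)\,\partial_s G=(x-m_0-s)G$ for $G(x,s)=h_\mu(x,m_0+s)=\sum_n H_n(x)s^n/n!$ --- is exactly the right engine, and your coefficient extraction for (ii)$\Rightarrow$(iii), its reversal for (iii)$\Rightarrow$(ii), and the scheme for (i)$\Rightarrow$(iii) (write the Favard three-term recurrence with $x$-independent $A_n,B_n,C_n$, compare with the PDE expansion $V(s)=\sum_j v_j s^j$, and read off $v_j=0$ for $j\ge 3$ from the $H_{n-2},H_{n-3},\dots$ components) all check out; this mirrors, in the exponential kernel, what the paper does for the Cauchy--Stieltjes kernel.

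The genuine gap is in (iii)$\Rightarrow$(i). Favard's theorem hands you \emph{some} positive measure $\tilde\mu$ orthogonalizing the recurrence, but your stated reason for concluding $\tilde\mu=\mu$ --- ``uniqueness of the generating measure of a NEF'' --- is not a valid inference: a NEF has many generating measures, and nothing in Favard's construction ties $\tilde\mu$ to the exponential family structure, so as written the implication does not close. Moreover invoking the Morris classification to guarantee $n(\alpha_2(n-1)+1)>0$ imports the hard part of the theorem you are proving, and in the binomial case ($\alpha_2<0$) this coefficient actually vanishes at finite $n$, so the claim is false as stated. The repair is both simpler and Favard-free, and is precisely what the paper does in steps (a)--(c) of its proof of Theorem \ref{TH 1}: you already have $\int H_n\,d\mu=0$ for $n\ge 1$ and $\deg H_n=n$; iterating the recurrence (iii) shows $x^qH_n$ is a finite linear combination of $H_{n-q},\dots,H_{n+q}$, hence $\int x^qH_n\,d\mu=0$ for all $q<n$, which is $\mu$-orthogonality directly. (Alternatively, your Favard route can be salvaged by observing that $\int H_n\,d\tilde\mu=\delta_{n0}=\int H_n\,d\mu$ forces $\mu$ and $\tilde\mu$ to share all moments, and orthogonality is a statement about moments only --- but that is not the argument you gave.) A secondary, minor issue is that the interchange of $\sum_n$ and $\int$ underlying the generating-function manipulations is asserted without justification; the paper's Lemma 2.2 exists precisely to control this in the CSK setting.
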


The polynomials associated to quadratic \CSK families have also a characterizing property of orthogonality in the Feinsilver way, more precisely we have
\begin{theorem}\label{TH 1}
Let ${\mathcal{K}}(\nu)=\{Q_m(dx),\ m\in(m_-,m_+)\}$ be the \CSK family generated by a compactly supported probability measure $\nu$ with mean $m_0(\nu)=0$. Suppose that $\V_{\nu}$ is analytic near $0$ and $\V_{\nu}(0)>0$. The density $f_{\nu}(x,m)$ of $Q_m$ with respect to $\nu$ is given by \eqref{F(V)}. Consider the polynomials $(P_n)_{n\in\mathbb{N}}$ defined by
\begin{equation}\label{def poly}
\left.P_n(x)=\displaystyle\lim_{m\longrightarrow 0}\mathcal{D}_0^nf_{\nu}(x,m)
=\frac{1}{n!}\frac{\partial^n}{\partial m^n}f_{\nu}(x,m)\right|_{m=0}.
\end{equation}
Then the three following statements are equivalent:
\begin{itemize}
\item[(i)] The polynomials $(P_n)_{n\in \mathbb{N}}$ are $\nu$-orthogonal.
\item[(ii)] ${\mathcal{K}}(\nu)$ is a quadratic \CSK family.
\item[(iii)] There exists real numbers $(a_i)_{0\leq i\leq2}$ such that
$$xP_n(x)=(1+a_2)P_{n-1}(x)+a_1P_n(x)+a_0P_{n+1}(x).$$
Furthermore, in this case we have $\V_{\nu}(m)=a_0+a_1m+a_2m^2.$
\end{itemize}
\end{theorem}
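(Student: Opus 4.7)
The strategy is to exploit the generating function identity $f_\nu(x,m) = \sum_{n\geq 0} m^n P_n(x)$ (Taylor expansion near $m = 0$, valid under the analyticity hypothesis on $\V_\nu$) together with the closed-form evaluation of the bilinear integral
\[H(m,m') := \int f_\nu(x,m)\,f_\nu(x,m')\,\nu(dx) = \sum_{n,k\geq 0} m^n (m')^k \int P_n(x) P_k(x)\,\nu(dx).\]
Since $f_\nu(x,m)\,\nu(dx) = Q_m(dx)$ by \eqref{F(V)}, and since $\V_\nu(m') + m'(m'-x) = m'(z(m')-x)$, one may write $H(m,m') = (\V_\nu(m')/m')\,G_{Q_m}(z(m'))$. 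Substituting the formula for $G_{Q_m}$ from Lemma~\ref{L.G(Q)} and using $G_\nu(z(m')) = m'/\V_\nu(m')$ from \eqref{G2V}, a short cancellation yields the key identity
\[H(m,m') = \frac{A(m,m')}{A(m,m') + mm'(m-m')}, \qquad A(m,m') := \V_\nu(m)\,m' - m\,\V_\nu(m').\]
In particular, statement (i) is equivalent to $H(m,m')$ depending only on the product $u := mm'$.

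\textbf{Proof of (ii)$\Leftrightarrow$(i).} If $\V_\nu(m) = a_0 + a_1 m + a_2 m^2$, direct factorization gives $A = -(m-m')(a_0 - a_2 mm')$ and $A + mm'(m-m') = (m-m')[(1+a_2)mm' - a_0]$; the $(m-m')$ cancels, leaving $H = (a_0 - a_2 u)/(a_0 - (1+a_2)u)$, manifestly a function of $u$ alone, which gives (i). For the converse, expand $\V_\nu(m) = \sum_{k\geq 0} b_k m^k$ and substitute $m' = u/m$; the condition that $H$ depends only on $u$ reduces to
\[\frac{u\,\V_\nu(m) - m^2\,\V_\nu(u/m)}{m^2 - u} = (b_2 u - b_0) + \sum_{k\geq 3} b_k\, q_k(m,u), \qquad q_k(m,u) := \sum_{j=1}^{k-1} u^j m^{k-2j},\]
being a function of $u$ alone. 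The Laurent polynomials $q_k$ for $k \geq 3$ contribute to distinct nonzero powers of $m$ (odd $k$ to odd powers, even $k$ to nonzero even powers), so matching the coefficient of each $m^p$ with $p \neq 0$ forces $b_k = 0$ for all $k \geq 3$; hence $\V_\nu$ is a polynomial of degree at most $2$.

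\textbf{Proof of (ii)$\Leftrightarrow$(iii).} The defining identity $f_\nu \cdot [\V_\nu + m(m-x)] = \V_\nu$, specialized to $\V_\nu(m) = a_0 + a_1 m + a_2 m^2$, becomes
\[\Big(\sum_{n \geq 0} m^n P_n(x)\Big) \cdot [a_0 + (a_1 - x)m + (1+a_2)m^2] = a_0 + a_1 m + a_2 m^2.\]
Matching the coefficients of $m^0$ and $m^1$ recovers $P_0 = 1$ and $P_1 = x/a_0$; matching the coefficient of $m^{n+1}$ for $n \geq 2$ (where the right side already vanishes) gives exactly $xP_n = (1+a_2)P_{n-1} + a_1 P_n + a_0 P_{n+1}$, which is (iii). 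Conversely, the recurrence together with the initial polynomials forces the generating function $F(x,m) = \sum_n m^n P_n(x)$ to satisfy $F \cdot [a_0 + a_1 m + (1+a_2)m^2 - mx] = R(m,x)$ for some polynomial $R$ of degree $\leq 2$ in $m$; comparing Taylor coefficients in $m$ with the identity $F \cdot [\V_\nu(m) + m^2 - mx] = \V_\nu(m)$ yields $b_k = 0$ for all $k \geq 3$, so $\V_\nu(m) = a_0 + a_1 m + a_2 m^2$.

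\textbf{Main obstacle.} The most delicate step is (i)$\Rightarrow$(ii), where one must distill the quadraticity of $\V_\nu$ out of the single scalar condition that $H(m,m')$ depends only on $mm'$. The key is the substitution $m' = u/m$, which converts the question into a linear-independence statement for the Laurent polynomials $q_k(m,u)$, $k \geq 3$; each new $q_k$ contributes an extremal monomial $m^{\pm(k-2)}$ not present in any $q_\ell$ with $3 \leq \ell < k$, making the independence easy to track by induction on the largest surviving index. All other steps are routine manipulations of the generating function together with Lemma~\ref{L.G(Q)}.
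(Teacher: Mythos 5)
Your argument is essentially correct, and it shares with the paper the central computation: both reduce everything to the closed form of the bilinear integral $g(m,\widetilde m)=\int f_\nu(x,m)f_\nu(x,\widetilde m)\,\nu(dx)$ obtained from Lemma~\ref{L.G(Q)}, which is exactly the paper's identity \eqref{g(m,m tilde)} (your $A+mm'(m-m')$ is the paper's denominator $\widetilde m m^2-m\widetilde m^2+\widetilde m\V_\nu(m)-m\V_\nu(\widetilde m)$). Where you diverge is in how the equivalences are closed. For (i)$\Rightarrow$(ii) the paper differentiates \eqref{g(m,m tilde)} twice in $\widetilde m$ at $\widetilde m=0$ and reads off $\V_\nu(m)=(\beta_2\V_\nu(0)^2-1)m^2+m\V'_\nu(0)+\V_\nu(0)$ directly; your route via the substitution $m'=u/m$ and linear independence of the Laurent polynomials $q_k$ is valid but more roundabout (and the parenthetical ``even $k$ to nonzero even powers'' is slightly off, since $j=k/2$ contributes an $m^0$ term; the argument survives because each $q_k$ still carries the extremal monomial $m^{k-2}$, or more simply because the coefficient of $m^p$, $p>0$, is $\sum_k b_k u^{(k-p)/2}$). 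More significantly, the paper closes the cycle with a direct proof of (iii)$\Rightarrow$(i) by the classical mechanism (showing $\int P_n\,d\nu=0$, propagating the three-term recurrence to $x^qP_n$, and controlling leading coefficients), whereas you prove (iii)$\Rightarrow$(ii) by playing the recurrence-generated functional equation $F\cdot[a_0+a_1m+(1+a_2)m^2-mx]=R(m,x)$ against $F\cdot[\V_\nu(m)+m^2-mx]=\V_\nu(m)$. That step is only sketched; it does work, but the decisive comparison is of coefficients in $x$ after cross-multiplying (forcing $\deg_x R\le 0$ and then $\V_\nu=-\rho_0$), not ``Taylor coefficients in $m$'' as you say, and it deserves to be written out.

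One caveat worth flagging for both you and the paper: the recurrence in (iii) with coefficient $1+a_2$ on $P_{n-1}$ only holds for $n\ge 2$; matching the coefficient of $m^2$ gives $xP_1=P_0+a_1P_1+a_0P_2$ (Jacobi parameter $1$, not $1+a_2$, at the first step, as is standard for free Meixner laws). Your derivation correctly restricts to $n\ge 2$ but then asserts that this ``is (iii)'' without noting the discrepancy at $n=1$; the paper's statement and its table ($n\ge1$) have the same defect.
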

It is shown in \cite{Favard}, that there exists a unique compactly supported positive measure $\nu$ on $\mathbb{R}$, up to a constant multiplication, such that a sequence of polynomials $(T_n)_{n\in\mathbb{N}}$, generated by a three-terms recursion formula with constant coefficients, are $\nu$-orthogonal. In \cite{Cohen-Trenholme}, Cohen and Trenholme calculated the measure $\nu$ explicitly, for which  the sequence of polynomials $(T_n)$ is orthogonal. The normalization for measure $\nu$, given by Cohen and Trenholme is not one for the probability measure. In (\cite{Saitoh-Yoshida}, Theorem 2.1) a modified version of this result is given, by normalizing their measure, to obtain probability measure. Theorem \ref{TH 1} deals with orthogonal polynomials from a point of view related to \CSK families.
\begin{proof}
$(i)\Longrightarrow (ii)$. There exists $r>0$ such that for all $m$ in $]-r,r[$,
$$f_{\nu}(x,m)=\sum_{n\geq0}m^nP_n(x)$$
If for $(m,\widetilde{m})\in(]-r,r[)^2$, we set
$$g(m,\widetilde{m})=\int f_{\nu}(x,m)f_{\nu}(x,\widetilde{m})\nu(dx)$$
then from the orthogonality of the polynomials $(P_n)$, we get
\begin{eqnarray*}
g(m,\widetilde{m}) & = & \int f_{\nu}(x,m)f_{\nu}(x,\widetilde{m})\nu(dx)
 =  \int\left(\displaystyle\sum_{n,\widetilde{n}\geq0}
m^n(\widetilde{m})^{\widetilde {n}}P_n(x)P_{\widetilde{n}}(x)\right)\nu(dx)\\
& = & \displaystyle\sum_{n\geq0}(m\widetilde{m})^{n}\int P_n(x)^2 \nu(dx)=1+\displaystyle\sum_{n\geq1}(m\widetilde{m})^{n}\int P_n(x)^2 \nu(dx).
\end{eqnarray*}
On the other hand, we have
$$
g(m,\widetilde{m})
 =  \int
\frac{\V_{\nu}(\widetilde{m})}{\V_{\nu}(\widetilde{m})+\widetilde{m}(\widetilde{m}-x)}Q_m(dx)
 =  \frac{\V_{\nu}(\widetilde{m})}{\widetilde{m}}
G_{Q_m}\left(\frac{\V_{\nu}(\widetilde{m})}{\widetilde{m}}+\widetilde{m}\right).
$$
Using \eqref{G(nu)2G(Q)}, we get
\begin{equation}\label{g(m,m tilde)}g(m,\widetilde{m})=\frac{\widetilde{m}\V_{\nu}(m)-m\V_{\nu}(\widetilde{m})}
{\widetilde{m}m^2-m\widetilde{m}^2+\widetilde{m}\V_{\nu}(m)-m\V_{\nu}(\widetilde{m})}.
\end{equation}
Taking the derivative of \eqref{g(m,m tilde)} with respect to $\widetilde{m}$, we get for all $(m,\widetilde{m})\in(]-r,r[)^2$
\begin{equation}\label{derive g}
\frac{m\widetilde{m}^2\V_{\nu}(m)+(m^2\widetilde{m}^2-\widetilde{m}m^3)\V'_{\nu}(\widetilde{m})
+(m^3-2m^2\widetilde{m})\V_{\nu}(\widetilde{m})}
{[\widetilde{m}m^2-m\widetilde{m}^2+\widetilde{m}\V_{\nu}(m)-m\V_{\nu}(\widetilde{m})]^2}=
\displaystyle\sum_{n\geq1}n\beta_n m(m\widetilde{m})^{n-1},
\end{equation}
with $\beta_n=\int P_n(x)^2 \nu(dx)$.\\
Making $\widetilde{m}=0$ in \eqref{derive g}, we get $m/\V_{\nu}(0)=m\beta_1$. This is true for all $m\in ]-r,r[$, then $\beta_1=1/\V_{\nu}(0).$
Again we take the derivative of \eqref{derive g} with respect to $\widetilde{m}$ and we let $\widetilde{m}=0$, we get for all $m$ in $]-r,r[$
$$[\V_{\nu}(m)+m^2-m\V'_{\nu}(0)-\V_{\nu}(0)]/\V_{\nu}(0)^2=\beta_2m^2.$$
Therefore,
$$\V_{\nu}(m)=(\beta_2\V_{\nu}(0)^2-1)m^2+m\V'_{\nu}(0)+\V_{\nu}(0).$$
Then, $\V_{\nu}$ is quadratic on $]-r,r[$, and by extension ${\mathcal{K}}(\nu)$ is a quadratic \CSK family.

$(ii)\Longrightarrow (iii)$. From (ii), there exists real numbers $(a_i)_{0\leq i\leq2}$ such that
$$\V_{\nu}(m)=a_0+a_1m+a_2m^2.$$
On the other hand we know that there exists $r>0$ such that for all $m\in ]-r,r[$,
\begin{equation}\label{f_nu}
f_{\nu}(x,m)=\sum_{n\geq0}m^nP_n(x).
\end{equation}
Applying $\mathcal{D}_0(.) $ to the both side of \eqref{f_nu}, we obtain
$$\frac{x-m}{\V_{\nu}(m)}f_{\nu}(x,m)=\sum_{n\geq0}m^{n-1}P_n(x),$$
which is equivalent to
$$\sum_{n\geq0}(x-m)m^nP_n(x)=\sum_{n\geq0}(a_0+a_1m+a_2m^2)m^{n-1}P_n(x).$$
Then, we have
$$\sum_{n\geq0}(x-a_1)m^nP_n(x)=\sum_{n\geq0}(1+a_2)m^{n+1}P_n(x)+
\sum_{n\geq0}a_0m^{n-1}P_n(x).$$
By identification, we get
$$xP_n(x)=(1+a_2)P_{n-1}(x)+a_1P_n(x)+a_0P_{n+1}(x).$$
$(iii)\Longrightarrow (i)$. The result is easily obtained if we verify the three following facts:

$(a)$ For all $n\in\mathbb{N}^*$, $\int P_n(x)\nu(dx)=0$.

$(b)$ There exists real numbers $\beta^s_{n,q}$ such that, for all $n,\ q\in\mathbb{N}^*$
$$x^q P_n(x)=\beta^0_{n,q}P_{n-q}(x)+\sum_{n-q+1\leq s\leq n+q }\beta^s_{n,q}P_s(x)$$
with $\beta^0_{n,q}=0$ if $n<q$.

$(c)$ There exists real numbers $(\alpha_q)_{0\leq q\leq n}$ such that
$$P_n(x)=\alpha_n x^n+\sum_{0\leq q\leq n-1}\alpha_q x^q.$$

\textit{Proof of} $(a)$. We first observe that
$$\int \mathcal{D}_0f_{\nu}(x,m)\nu(dx)=\int\frac{(x-m)}{\V_{\nu}(m)} f_{\nu}(x,m)\nu(dx)=\frac{1}{\V_{\nu}(m)}\int(x-m)Q_m(dx)=0.$$
We have that
\begin{eqnarray*}
\int \mathcal{D}_0^{n+1}f_{\nu}(x,m)\nu(dx)
& = & \int\frac{\mathcal{D}_0^nf_{\nu}(x,m)-\mathcal{D}_0^nf_{\nu}(x,0)}{m}\nu(dx)\\
& = & \frac{\int\mathcal{D}_0^nf_{\nu}(x,m)\nu(dx)-\int\mathcal{D}_0^nf_{\nu}(x,0)\nu(dx)}{m}=  \mathcal{D}_0\left(\int\mathcal{D}_0^nf_{\nu}(x,m)\nu(dx)\right).
\end{eqnarray*}
Hence we obtain that, for all $n\in\mathbb{N}^*$
$$\int\mathcal{D}_0^nf_{\nu}(x,m)\nu(dx)=0.$$
\begin{eqnarray*}
|\mathcal{D}_0^nf_{\nu}(x,m)| & = & |P_n(x,m)f_{\nu}(x,m)|\leq\sup_{m\in(m_-,m_+)}\sup_{x\in\  supp(\nu)}|P_n(x,m)|f_{\nu}(x,m)\\
& \leq & \sup_{m\in(m_-,m_+)}\sup_{x\in\  supp(\nu)}\{|P_n(x,m)|\}\frac{\V_{\nu}(m)/m}{\V_{\nu}(m)/m+m-x}\\
& \leq & \sup_{m\in(m_-,m_+)}\sup_{x\in\  supp(\nu)}\{|P_n(x,m)|\}\frac{\sup_{m\in(m_-,m_+)}\{\V_{\nu}(m)/m\}}{B(\nu)-x}=g(x).
\end{eqnarray*}
Here we use the fact that $\V_{\nu}(m)/m+m\geq B(\nu)$ for all $m\in(m_-,m_+)$. It is clear that $g(.)$ is $\nu$-integrable. This implies that
\begin{equation}
  \label{Question1}
  \lim_{m\longrightarrow 0}\int \mathcal{D}_0^nf_{\nu}(x,m)\nu(dx)=\int \lim_{m\longrightarrow 0}\mathcal{D}_0^nf_{\nu}(x,m)\nu(dx).
\end{equation}
This end the proof of $(a)$.

\textit{Proof of} $(b)$. We can write (iii) as
 \begin{equation}\label{recurcive relation 1}
x P_n(x)=\beta^0_{n,1}P_{n-1}(x)+\displaystyle\sum_{n\leq s\leq n+1}\beta^s_{n,1}P_{s}(x).
\end{equation}
For a fixed $n\in \mathbb{N}^*$, let us show by induction that for all $q\in \mathbb{N}^*$ such that $q\leq n$, we have
\begin{equation}\label{recurcive relation 2}
x^q P_n(x)=\beta^0_{n,q}P_{n-q}(x)+\displaystyle\sum_{n-q+1\leq s\leq n+q}\beta^s_{n,q}P_{s}(x).
\end{equation}
where $\beta^0_{n,q}=0$ if $n=q$.

For $q=1$, it is nothing but equality \eqref{recurcive relation 1}.

Suppose now that \eqref{recurcive relation 2} is true for $q$ such that $q+1\leq n$. Then we have
\begin{eqnarray*}
x^{q+1}P_n(x)
& = & x(x^q P_n(x))=\beta^0_{n,q}xP_{n-q}(x)+\displaystyle\sum_{n-q+1\leq s\leq n+q}\beta^s_{n,q}xP_{s}(x)\\
& = & \beta^0_{n,q}\left\{\beta^0_{n-q,1}P_{n-q-1}(x)+\displaystyle\sum_{n-q\leq s'\leq n-q+1}\beta^{s'}_{n-q,1}P_{s'}(x)\right\}\\
& + & \displaystyle\sum_{n-q+1\leq s\leq n+q}\beta^s_{n,q}\left\{\beta^{0}_{s,1}P_{s-1}(x)+
\displaystyle\sum_{s\leq s"\leq s+1}\beta^{s"}_{s,1}P_{s"}(x)\right\}\\
& = & \beta^{0}_{n,q+1}P_{n-(q+1)}(x)+ \displaystyle\sum_{n-q\leq s\leq n+q+1}\beta^s_{n,q+1}P_{s}(x).
\end{eqnarray*}
where $\beta^{0}_{n,q+1}=\beta^0_{n,q}\beta^0_{n-q,1}$ and $\beta^{0}_{n,q+1}=0$ if $n=q+1$ (because $n-q=1$ and then $\beta^0_{n-q,1}=0$).

\textit{Proof of} $(c)$. We show by induction that
\begin{equation}\label{poly eqt}
P_n(x)=\frac{1}{(\V_{\nu}(0))^n}x^n+\displaystyle\sum_{0\leq q\leq n-1}\alpha_q x^q.
\end{equation}
For $n=1$, we have that $P_1(x)=\displaystyle\frac{x}{\V_{\nu}(0)}$.\\
Let $n\in\mathbb{N}^*$ and suppose that \eqref{poly eqt} is true for $n$. The expression
$\frac{P_{n}(x,m)-P_{n}(x,0)}{m}$ is a polynomial in $x$ of degree $\leq n$. On the other hand
\begin{eqnarray*}
\frac{P_{n}(x,m)-P_{n}(x,0)}{m} & = & \frac{\frac{\mathcal{D}_0^nf_{\nu}(x,m)}{f_{\nu}(x,m)}-\frac{\mathcal{D}_0^nf_{\nu}(x,0)}{f_{\nu}(x,0)}}{m}
=\mathcal{D}_0\left(\frac{\mathcal{D}_0^nf_{\nu}(x,m)}{f_{\nu}(x,m)}\right)\\
& = & \frac{f_{\nu}(x,m) \mathcal{D}_0^{n+1}f_{\nu}(x,m)-\mathcal{D}_0^nf_{\nu}(x,m)\mathcal{D}_0f_{\nu}(x,m) }{f_{\nu}(x,m) f_{\nu}(x,0) }\\
& = &  \mathcal{D}_0^{n+1}f_{\nu}(x,m)-\frac{(x-m)}{\V_{\nu}(m)}\mathcal{D}_0^nf_{\nu}(x,m).
\end{eqnarray*}
This implies that \begin{equation}\label{poly eqt2}
\displaystyle\lim_{m\longrightarrow 0}\frac{P_{n}(x,m)-P_{n}(x,0)}{m}=P_{n+1}(x)-\frac{x}{\V_{\nu}(0)}P_{n}(x)
\end{equation}
is well defined. Since the left sided part of \eqref{poly eqt2} can be written as $\displaystyle\sum_{0\leq k\leq n}a_kx^k$, with $a_k\in\mathbb{R}$ for $0\leq k\leq n$, we have,
$$
P_{n+1}(x)
=  \displaystyle\sum_{0\leq k\leq n}a_kx^k+\frac{x}{\V_{\nu}(0)}\left(\frac{x^n}{(\V_{\nu}(0))^n}+\displaystyle\sum_{0\leq q\leq n-1}\alpha_q x^q\right)
=  \frac{x^{n+1}}{(\V_{\nu}(0))^{n+1}}+\displaystyle\sum_{0\leq i\leq n}\delta_i x^i,
$$
with  $\delta_i\in\mathbb{R}$ for $0\leq i\leq n$.
\end{proof}

We give, for each of the six type of \CSK families with polynomial variance function of degree $\leq 2$, the sequence of $\nu$-orthogonal polynomials $P_n(x)$ defined by its recurrence relation for $m_0(\nu)=0$.

\medskip

\begin{tabular}{|c|c|}
  \hline
 type &  Induction relations \\ \hline
  Semi-circle distribution  &   $P_0(x)=1, P_1(x)=x$,  \\
  $\V_{\nu}(m)=1$ &  $P_{n+1}(x)= xP_n(x)-P_{n-1}(x)$,  \\
 &   $  n\geq 1$ \\ \hline
   Marchenko-Pastur &   $P_0(x)=1, P_1(x)=x$, \\
     $\V_{\nu}(m)=1+am$ &   $P_{n+1}(x)= (x-a)P_n(x)-P_{n-1}(x),$  \\
    with $a\neq 0$&    $ \ n\geq 1$ \\ \hline
  free Pascal &  $P_0(x)=1,\ P_1(x)=x,$   \\
   $\V_{\nu}(m)=1+am+bm^2$&    $P_{n+1}(x)= (x-a)P_n(x)-(1+b)P_{n-1}(x), $ \\
 with $b> 0$ and $a^2>4b$&   $ n\geq 1$ \\ \hline
free Gamma  &   $P_0(x)=1,\ P_1(x)=x,$ \\
 $\V_{\nu}(m)=1+am+bm^2$&   $P_{n+1}(x)= (x-a)P_n(x)-(1+b)P_{n-1}(x),$ \\
 with $b> 0$ and $a^2=4b$ &   $ n\geq 1$ \\ \hline
  the free analog of hyperbolic type law &   $P_0(x)=1,\ P_1(x)=x,$ \\
  $\V_{\nu}(m)=1+am+bm^2$ &  $P_{n+1}(x)= (x-a)P_n(x)-(1+b)P_{n-1}(x), $ \\
   with $b> 0$ and $a^2<4b$ &    $ n\geq 1$ \\ \hline
  free binomial  &   $P_0(x)=1,\ P_1(x)=x, $ \\
 $\V_{\nu}(m)=1+am+bm^2$  &   $P_{n+1}(x)= (x-a)P_n(x)-(1+b)P_{n-1}(x), $  \\
 with $-1\leq b< 0$ &   $ n\geq 1$  \\
  \hline
\end{tabular}

\medskip
\subsection{Characterization of the quadratic CSK families in the Meixner way}
Meixner (\cite{Meixner}) characterizes the distributions $\mu$ for which there exists a family of $\mu$-orthogonal polynomials with an exponential generating function. These distributions generate exactly the Morris class of NEFs, more precisely, (see \cite{Letac}):
\begin{theorem}\label{TH 02}
Let $F$ be a NEF on $\mathbb{R}$ and let $\mu$ an element of $F$ with mean $m_0$. If $(H_n)_{n\in\mathbb{N}}$ is a sequence of $\mu$-orthogonal polynomials such that $H_n$ is monic of degree $n$. Then the following statements are equivalent:
\begin{itemize}
\item[(i)] there exist an open set $O$ of $\mathbb{R}$ and $a,\ b:O\longrightarrow\mathbb{R}$ two analytic functions such that for any $z$ in $O$
$$\sum\frac{z^n}{n!}H_n=\exp\{a(z)x+b(z)\}$$
\item[(ii)] $F$ is a quadratic NEF. In this case  $a(z)=\phi_{\mu}(\alpha z)$ and $b(z)=-K_{\mu}(a(z))$ for some real number $\alpha$.
\end{itemize}
The sequence $(H_n)$ is said to have an exponential generating function.
\end{theorem}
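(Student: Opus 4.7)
The proof splits into the two implications, each with a different flavor. For (ii) $\Rightarrow$ (i) I would exhibit the generating function by Taylor-expanding the NEF density and invoking the Feinsilver orthogonality of Theorem \ref{TH 01}. For (i) $\Rightarrow$ (ii) I would square the generating identity, integrate against $\mu$, and extract a functional equation whose only analytic solutions force $V_F$ to be a polynomial of degree at most two.

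For the forward implication, take $\mu=P(m_0,F)$, so that $h_\mu(x,m)=\exp\{\phi_\mu(m)x-K_\mu(\phi_\mu(m))\}$ and $\phi_\mu(m_0)=0$, and Taylor-expand in $m$ around $m_0$:
$$h_\mu(x,m_0+t)=\sum_{n\ge 0}\frac{t^n}{n!}H_n^{\mathrm{F}}(x),$$
identifying the Feinsilver polynomials $H_n^{\mathrm{F}}$ of Theorem \ref{TH 01} as the exponential-generating coefficients. By that theorem they are $\mu$-orthogonal in the quadratic case, and its recursion forces their leading coefficient to be $V_F(m_0)^{-n}$. Substituting $t=\alpha z$ with $\alpha=V_F(m_0)$ and rescaling to the monic polynomials $H_n=\alpha^n H_n^{\mathrm{F}}$ yields
$$\sum_{n\ge 0}\frac{z^n}{n!}H_n(x)=\exp\{a(z)x+b(z)\},\qquad a(z)=\phi_\mu(m_0+\alpha z),\ \ b(z)=-K_\mu(a(z)),$$
which matches the stated form modulo the conventional shift $m_0\mapsto 0$.

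For the converse, assume $\Phi(z,x):=\sum(z^n/n!)H_n(x)=\exp\{a(z)x+b(z)\}$ with monic $\mu$-orthogonal $H_n$, so $H_0=1$ gives $a(0)=b(0)=0$. I would compute $J(z,w):=\int\Phi(z,x)\Phi(w,x)\,\mu(dx)$ in two ways. Orthogonality gives $J(z,w)=\sum_{n\ge 0}(zw)^n (n!)^{-2}\int H_n^2\,d\mu$, a function of $zw$ alone; the exponential form gives $J(z,w)=\exp\{b(z)+b(w)+K_\mu(a(z)+a(w))\}$ on a neighborhood of the origin where $a(z)+a(w)\in\Theta(\mu)$. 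Taking logarithms,
$$b(z)+b(w)+K_\mu(a(z)+a(w))=C(zw),\qquad C(0)=0,$$
and setting $w=0$ produces $b(z)=-K_\mu(a(z))$, which collapses the identity to $K_\mu(a(z)+a(w))-K_\mu(a(z))-K_\mu(a(w))=C(zw)$.

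The main obstacle is extracting the quadraticity of $V_F$ from this last functional equation. The plan is to differentiate in $z$ and then in $w$ and, writing $\theta=a(z),\ \eta=a(w)$, obtain
$$a'(z)\,a'(w)\,K_\mu''(\theta+\eta)=C'(zw)+zw\,C''(zw),$$
so that $K_\mu''(\theta+\eta)/[a'(z)a'(w)]$ depends only on the product $zw=a^{-1}(\theta)a^{-1}(\eta)$. Specializing $w=0$ recovers $K_\mu''(\theta)$ as a scalar multiple of $a'(a^{-1}(\theta))$; feeding this back and differentiating once more converts the identity into an ODE for $V_F(m)=K_\mu''(\phi_\mu(m))$ whose only analytic solutions are polynomials of degree at most two in $m-m_0$. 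This separation-to-ODE step is the classical Morris argument and the main technical hurdle; once it is in hand, the explicit form $a(z)=\phi_\mu(\alpha z)$ and $b(z)=-K_\mu(a(z))$ follow from matching Taylor coefficients at $z=0$ with $\alpha=a'(0)V_F(m_0)$.
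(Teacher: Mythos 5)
A preliminary remark: the paper contains no proof of Theorem~\ref{TH 02}. It is quoted verbatim as a classical result of Meixner \cite{Meixner} in the formulation of \cite{Letac}, and serves only as the model that Theorem~\ref{TH 2} transposes to the CSK setting. So there is no in-paper argument to compare yours with; I can only judge your outline against the standard proof. Your direction (ii)$\Rightarrow$(i) is essentially right: by uniqueness of monic $\mu$-orthogonal polynomials, the $H_n$ of the statement must be the rescaled Feinsilver polynomials $V_F(m_0)^nH_n^{\mathrm F}$ (the recursion \eqref{recursion} gives leading coefficient $\phi'_\mu(m_0)^n=V_F(m_0)^{-n}$), and Theorem~\ref{TH 01} supplies their orthogonality in the quadratic case; the Taylor expansion of $h_\mu(x,\cdot)$ about $m_0$, legitimate since $\phi_\mu$ and $K_\mu\circ\phi_\mu$ are real-analytic, then yields the stated generating function.

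The converse is where the theorem lives, and your write-up stops at the decisive point. Two gaps. First, the identity $J(z,w)=\sum_n(zw)^n(n!)^{-2}\int H_n^2\,d\mu$ requires $L^2(\mu)$-convergence of $\sum z^nH_n/n!$ near $0$ before you may integrate term by term; this is precisely the issue the paper handles for the CSK case with a dedicated lemma (bounding $\int T_n^2\,d\nu$ by moments), and the analogous bound $\int H_n^2\,d\mu\le\gamma_{2n}\le C\,(2n)!\,r^{-2n}$ from analyticity of $L_\mu$ should be stated. Second, and more seriously, the passage from $K_\mu(a(z)+a(w))-K_\mu(a(z))-K_\mu(a(w))=C(zw)$ to quadraticity of $V_F$ is announced but not executed: you defer it to ``the classical Morris argument.'' The step does close, and cleanly: setting $w=0$ in $a'(z)a'(w)K_\mu''(a(z)+a(w))=C'(zw)+zwC''(zw)$ gives $a'(z)=C'(0)/K_\mu''(a(z))$ (using $a(0)=0$ and $a'(0)=1$, the latter forced by monicity of $H_1$); substituting back, taking logarithms, applying $\partial_z\partial_w$ once more and again setting $w=0$ yields $\bigl(\log K_\mu''\bigr)''(\theta)=c\,K_\mu''(\theta)$ for a constant $c$; since $\tfrac{d}{d\theta}=V_F(m)\tfrac{d}{dm}$ under $m=K_\mu'(\theta)$, this reads $V_FV_F''=cV_F$, i.e. $V_F''\equiv c$, which is (ii). As written, your proof asserts the conclusion of this computation without performing it, so its central step is missing; the identification $a(z)=\phi_\mu(\alpha z)$ then indeed follows from $a'(z)=C'(0)/K_\mu''(a(z))=C'(0)\,\phi_\mu'(K_\mu'(a(z)))$ once quadraticity is in hand.
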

We say that the generating function of the sequence of polynomials $T_n$ is given by a Cauchy-Stieltjes type kernel if
\begin{equation}\label{csk type}
\displaystyle\sum_{n\in\mathbb{N}}T_n(x)z^n=\frac{1}{u(z)[f(z)-x]}
\end{equation}
where $u$ and $z\longmapsto zf(z)$ are analytic functions around $0$ with $\displaystyle\lim_{z\longrightarrow 0}\frac{u(z)}{z}=\displaystyle\lim_{z\longrightarrow 0}zf(z)=1$.
The free analog of the Meixner result basing on the notion of orthogonal polynomials is due to Anshelevich (see \cite{Anshelevich}). He characterizes the distributions $\mu$ for which there exists a family of $\mu$-orthogonal polynomials with  a Cauchy-Stieltjes generating function. This distributions turns to be the free Meixner distributions. On the other hand, another proof of the Meixner classification was given in \cite{Kubo}, via orthogonal polynomials using Asai-Kuo-Kubo's criterion basing on the multiplicative renormalization method applied with $\exp(x)$. In \cite{Bozejko-Demni1}, Bozejko and Demni give the free probabilistic interpretation of the multiplicative renormalization method applied with $(1-x)^{-1}$. They give a proof of the Kubo's result on the characterization of the family of probability measure with Cauchy-Stieltjes type generating function for orthogonal polynomials. They have used this result to deduce Bryc and Ismail characterization of quadratic \CSK families, (see \cite{Bozejko-Demni2}).
 Our approach to the characterization of quadratic \CSK families is different from the one given in \cite{Bozejko-Demni2}, and it consist as a first step to give the connection between the $\nu$-orthogonal polynomials, having Cauchy-Stieltjes type kernel generating function, and the polynomials obtained from the density $f_{\nu}(x,m)$ of $Q_m$ with respect to $\nu$.
\begin{theorem}\label{TH 2}
Let ${\mathcal{K}}(\nu)$ be the \CSK family generated by a compactly supported probability measure $\nu$ with mean $m_0(\nu)=0$. Suppose that $(T_n)_{n\in\mathbb{N}}$ is a family of $\nu-$orthogonal polynomials such that $T_n$ is of degree $n$. Then the generating function of $(T_n)_{n\in\mathbb{N}}$ is given by a Cauchy-Stieltjes type kernel as in \eqref{csk type}, if and only if there exists $t\in\mathbb{R}^*$ such that, for all $n\in\mathbb{N}$,
$$T_n(x)=t^nP_n(x),$$
where $(P_n)$ is defined by \eqref{def poly}.
In this case, $f(z)=1/\psi_{\nu}(tz)$ and $u(z)=G_{\nu}(1/\psi_{\nu}(tz))$.
\end{theorem}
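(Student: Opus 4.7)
The approach rests on the master identity
$$f_{\nu}(x,m)\;=\;\sum_{n\geq 0} m^n P_n(x)\;=\;\frac{1}{G_{\nu}(1/\psi_{\nu}(m))\,(1/\psi_{\nu}(m)-x)},$$
which I obtain by combining the density formula \eqref{F(V)} with the relations $\V_{\nu}(m)/m + m = 1/\psi_{\nu}(m)$ from \eqref{m2v} and $G_{\nu}(1/\psi_{\nu}(m)) = m/\V_{\nu}(m)$ from \eqref{G2V}; the analytic expansion $f_{\nu}(x,m) = \sum_n m^n P_n(x)$ in a neighborhood of $m=0$ was already used in the proof of Theorem \ref{TH 1}. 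The ``if'' direction is then immediate: substituting $m = tz$ yields
$$\sum_n T_n(x)\,z^n = \sum_n (tz)^n P_n(x) = f_{\nu}(x,tz) = \frac{1}{G_{\nu}(1/\psi_{\nu}(tz))\,(1/\psi_{\nu}(tz)-x)},$$
identifying $f(z) = 1/\psi_{\nu}(tz)$ and $u(z) = G_{\nu}(f(z))$; the two normalization limits follow from $\psi_{\nu}(m)\sim m/\V_{\nu}(0)$ near $0$ and $G_{\nu}(w)\sim 1/w$ at infinity.

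For the converse, I proceed by pinning down $T_0$, $u$, and $f$ in succession, using the orthogonality of the $T_n$ and the hypothesis $m_0(\nu)=0$. Write $F(z,x)=\sum_n T_n(x) z^n = 1/[u(z)(f(z)-x)]$. The normalizations $u(z)/z\to 1$ and $zf(z)\to 1$ imply $u(z)(f(z)-x)\to 1$ as $z\to 0$, hence $T_0=F(0,x)=1$. Integrating $F$ against $\nu$ gives $G_{\nu}(f(z))/u(z)$ on the analytic side and $\sum z^n \int T_n\,d\nu = T_0 = 1$ on the orthogonality side (since $T_0=1$ forces $\int T_n\,d\nu = 0$ for $n\geq 1$), so
$$u(z)\;=\;G_{\nu}(f(z)).$$

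The crucial step is to compute $\int x\,F(z,x)\,\nu(dx)$ in two ways. Rewriting $xF = f(z)F - 1/u(z)$ (from $(f(z)-x)F=1/u(z)$) and integrating against $\nu$ gives $f(z) - 1/G_{\nu}(f(z))$. On the other hand, $\int xF\,d\nu = \sum_n z^n \int x T_n\,d\nu$; because $T_n$ is orthogonal to every polynomial of degree $<n$, only the terms $n=0,1$ can survive. The $n=0$ term is $m_0(\nu)=0$, while for $n=1$ the orthogonality $\int T_1\,d\nu = 0$ combined with $m_0(\nu)=0$ forces $T_1(x)=\ell_1 x$, whence $\int xT_1\,d\nu = \ell_1\gamma_2 = h_1/\ell_1$ with $h_1=\int T_1^2\,d\nu$. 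Equating yields
$$f(z) - 1/G_{\nu}(f(z))\;=\;(h_1/\ell_1)\,z.$$

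To conclude, I invoke the elementary identity $w - 1/G_{\nu}(w) = k_{\nu}(1/w)$, which is a direct repackaging of \eqref{m2v}, \eqref{z2m}, \eqref{G2V}: for $w = 1/\psi_{\nu}(m)$ one has $w - 1/G_{\nu}(w) = m$. Applied with $w = f(z)$, the previous display becomes $k_{\nu}(1/f(z)) = tz$ with $t := h_1/\ell_1$, i.e., $f(z) = 1/\psi_{\nu}(tz)$. Plugging back into the master identity gives $F(z,x) = f_{\nu}(x,tz) = \sum_n t^n P_n(x) z^n$, whence $T_n = t^n P_n$, and the stated forms of $f$ and $u$ drop out. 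The main obstacle is the two-way computation of $\int xF\,d\nu$: the mean-zero hypothesis combined with orthogonality collapses an \emph{a priori} infinite power series in $z$ to a single linear term, and this collapse is precisely what forces the ``hidden parameter'' $k_{\nu}(1/f(z))$ to be linear in $z$, which is the structural fact underlying the equality $T_n = t^n P_n$.
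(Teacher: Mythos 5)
Your proof is correct and follows essentially the same route as the paper: establish $u(z)=G_{\nu}(f(z))$ by integrating the generating function against $\nu$, then pin down $f(z)=1/\psi_{\nu}(tz)$ by pairing with the degree-one polynomial (your identity $w-1/G_{\nu}(w)=k_{\nu}(1/w)$ is exactly the paper's computation $\int xP_{1/f(z)}(dx)=k_{\nu}(1/f(z))$ in disguise), and conclude via $f_{\nu}(x,tz)=\sum_n (tz)^nP_n(x)$. Your derivation of $T_0=1$ from the normalizations is in fact slightly cleaner than the paper's ``up to $\widetilde{T}=T_n/T_0$''; note only that, as in the paper's ``obvious'' converse, the limits $\lim zf(z)=\lim u(z)/z=1$ actually force $t=\V_{\nu}(0)$ rather than holding for arbitrary $t\in\mathbb{R}^*$, a normalization wrinkle already present in the theorem's statement.
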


\begin{proof}
Up to $\widetilde{T}=T_n/T_0$, we can suppose $T_0=1.$

$\Leftarrow$ Is obvious.

$\Rightarrow$ There exist $r>0$ such that, for all $z\in]-r,r[$,
$$\int\left(\sum_{n\in\mathbb{N}}T_n(x)z^n\right)\nu(dx)=
\int\left(\sum_{n\in\mathbb{N}}T_n(x)T_0(x)z^n\right)\nu(dx)=\int T_0(x)^2\nu(dx)=1.$$
On the other hand, writing the generating function of $(T_n)$ as in \eqref{csk type}, we have
$$\int\left(\sum_{n\in\mathbb{N}}T_n(x)z^n\right)\nu(dx)=\int\frac{1}{u(z)[f(z)-x]}\nu(dx)=\frac{1}{u(z)}G_{\nu}(f(z)).$$
Hence
\begin{equation}\label{G nu (f)}
u(z)=G_{\nu}(f(z)).
\end{equation}
Proceeding similarly, we have that
\begin{equation}\label{TnT1}
\int\left(\sum_{n\in\mathbb{N}}T_n(x)T_1(x)z^n\right)\nu(dx)=\left(\int T_1(x)^2\nu(dx)\right)z.
\end{equation}
Or $T_1$ is a polynomial of degree $1$ in $x$, then there exists $\alpha\in\mathbb{R}^*$ and $\beta\in\mathbb{R}$ such that
\begin{equation}\label{T1}
T_1(x)=\alpha x+\beta.
\end{equation}
Since $\int T_1(x)T_0(x)\nu(dx)=\int T_1(x)\nu(dx)=0$, we get $\beta=0$ and $\int T_1(x)^2\nu(dx)=\int \alpha^2x^2\nu(dx)=\alpha^2 \V_{\nu}(0).$
Furthermore, using \eqref{G nu (f)}-\eqref{T1}, we get
\begin{eqnarray*}
\left(\displaystyle\int T_1(x)^2\nu(dx)\right)z
& = & \displaystyle\int\displaystyle\frac{1}{u(z)[f(z)-x]}T_1(x)\nu(dx) =  \displaystyle\int\displaystyle\frac{\alpha x}{G_{\nu}(f(z))[f(z)-x]}\nu(dx)\\
& = & \displaystyle\int\displaystyle\frac{ \alpha x}{M_{\nu}(1/f(z))[1-x/f(z)]}\nu(dx) =  \displaystyle\int \alpha xP_{1/f(z)}(dx)=\alpha k_{\nu}(1/f(z)),
\end{eqnarray*}
and we deduce that $\alpha^2 \V_{\nu}(0)z=\alpha k_{\nu}(1/f(z)).$
Therefore, with $t=\alpha \V_{\nu}(0)$, we have that $f(z)=1/\psi_{\nu}(tz)$. Finally, we obtain
$$\sum_{n\in\mathbb{N}}T_n(x)z^n=\frac{1}{G_{\nu}(1/\psi_{\nu}(tz))[1/\psi_{\nu}(tz)-x]}=f_{\nu}(x,tz).$$

\end{proof}
The following result is the CSK-version of Theorem \ref{TH 02}.
\begin{corollary}
Let ${\mathcal{K}}(\nu)$ be the \CSK family generated by a compactly supported probability measure $\nu$ with mean $m_0(\nu)=0$. Then there exists a family of $\nu$-orthogonal polynomials with a Cauchy-Stieltjes type kernel generating function if and only if ${\mathcal{K}}(\nu)$ is quadratic.
\end{corollary}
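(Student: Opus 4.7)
My plan is to deduce the Corollary directly by chaining Theorem \ref{TH 1} with Theorem \ref{TH 2}; no fundamentally new analytic input is required beyond these two results.

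For the direction ``$\mathcal{K}(\nu)$ quadratic $\Rightarrow$ existence of a $\nu$-orthogonal family with Cauchy--Stieltjes type generating function'', I would begin by observing that the quadratic hypothesis $\V_\nu(m)=a_0+a_1 m+a_2 m^2$ forces $\V_\nu$ to be entire (hence analytic near $0$), while $\V_\nu(0)=a_0=V_\nu(0)$ is the strictly positive variance of the non-degenerate measure $\nu$. These are precisely the standing hypotheses of Theorem \ref{TH 1}, whose implication (ii)$\Rightarrow$(i) then yields that the polynomials $(P_n)$ of \eqref{def poly} are $\nu$-orthogonal. Invoking the easy ``$\Leftarrow$'' direction of Theorem \ref{TH 2} with $t=1$ (so that $T_n=P_n$) immediately produces the generating function identity $\sum_{n} P_n(x)z^n = f_\nu(x,z)$, which is of the Cauchy--Stieltjes type \eqref{csk type} with $f(z)=1/\psi_\nu(z)$ and $u(z)=G_\nu(1/\psi_\nu(z))$.

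For the reverse direction, I would suppose a family $(T_n)$ of $\nu$-orthogonal polynomials with $\deg T_n=n$ whose generating function has the Cauchy--Stieltjes form \eqref{csk type} is given. The ``$\Rightarrow$'' part of Theorem \ref{TH 2} then supplies a $t\in\mathbb{R}^*$ with $T_n = t^n P_n$ for every $n\in\mathbb{N}$. In particular, the sequence $(P_n)$ itself is well-defined and, since multiplication by the nonzero constants $t^n$ preserves orthogonality, $(P_n)$ is $\nu$-orthogonal. Implication (i)$\Rightarrow$(ii) of Theorem \ref{TH 1} then concludes that $\mathcal{K}(\nu)$ is quadratic.

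The genuinely interesting content has already been absorbed into Theorems \ref{TH 1} and \ref{TH 2}, so the proof of the Corollary is essentially bookkeeping. The only point I would slow down on is verifying that the hypotheses of Theorem \ref{TH 1} (analyticity of $\V_\nu$ near $0$ and $\V_\nu(0)>0$) hold in each direction: in the forward direction this follows from $\V_\nu$ being a polynomial with $a_0$ equal to the variance of $\nu$, while in the reverse direction the identity $T_n=t^n P_n$ produced by Theorem \ref{TH 2} forces $\V_\nu$ to be analytic near $0$ (through the analytic behaviour of $\psi_\nu$) and $\V_\nu(0)$ to be the positive variance of $\nu$.
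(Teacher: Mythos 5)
Your proof is correct and follows essentially the same route as the paper, which simply deduces the corollary by combining Theorems \ref{TH 1} and \ref{TH 2}; you have merely spelled out the chaining (quadratic $\Rightarrow$ (ii)$\Rightarrow$(i) of Theorem \ref{TH 1} $\Rightarrow$ the easy direction of Theorem \ref{TH 2}, and conversely) together with the routine verification of the standing hypotheses. No further comment is needed.
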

\begin{proof}
Follows easily from Theorems \ref{TH 1} and \ref{TH 2}.
\end{proof}
\subsection*{Acknowledgement}
We thank  Pr. Abdelhamid Hassairi for his suggestion to work on the polynomials associated with Cauchy-Stieltjes kernel families. We also thank Pr. W{\l}odzimierz  Bryc for his important comments that help to improve the paper.

\end{document}